\newtheorem{definition}{Definition}
\newtheorem{example}{Example}
\newtheorem{theorem}{Theorem}
\newtheorem{lemma}[theorem]{Lemma}
\newtheorem{corollary}[theorem]{Corollary}
\newtheorem{claim}[theorem]{Claim}
\newcommand{\change}{\textcolor{black}}
\title{Local Boxicity and Maximum Degree}
\author[AM]{Atrayee Majumder} 
\author[RgM]{Rogers Mathew} 
\address[AM]{Department of Computer Science and Engineering, Indian Institute of Technology, Kharagpur, West Bengal - 721302, India.\\ Email: atrayee.majumder@iitkgp.ac.in}
\address[RgM]{Department of Computer Science and Engineering, Indian Institute of Technology, Hyderabad, Telengana - 502285, India.\\ Email: rogersmathew@gmail.com}
\date{}
\DeclareMathOperator{\boxicity}{box}
\DeclareMathOperator{\lbox}{lbox}
\DeclareMathOperator{\posetdim}{dim}
\DeclareMathOperator{\ldim}{ldim}
\DeclareMathOperator{\proddim}{pdim}
\DeclareMathOperator{\cubicity}{cub}
\DeclareMathOperator{\minimum}{min}
\DeclareMathOperator{\maximum}{max}
\begin{document}

\begin{abstract}
The \emph{local boxicity} of a graph $G$, denoted by $\lbox(G)$, is the minimum positive integer $l$ such that $G$ can be obtained using the intersection of $k$ (, where $k \geq l$,) interval graphs where each vertex of $G$ appears as a non-universal vertex in at most $l$ of these interval graphs. Let $G$ be a graph on $n$ vertices having $m$ edges. Let $\Delta$ denote the maximum degree of a vertex in $G$. We show that,
\begin{itemize}
\item \change{$\lbox(G) \leq 2^{13\log^{*}{\Delta}} \Delta$.} There exist graphs of maximum degree $\Delta$ having a local boxicity of $\Omega(\frac{\Delta}{\log\Delta})$. 
\item $\lbox(G) \in O(\frac{n}{\log{n}})$. There exist graphs on $n$ vertices having a local boxicity of $\Omega(\frac{n}{\log n})$. 
\item \change{$\lbox(G) \leq (2^{13\log^{*}{\sqrt{m}}} + 2 )\sqrt{m}$.} There exist graphs with $m$ edges having a local boxicity of $\Omega(\frac{\sqrt{m}}{\log m})$. 
\item the local boxicity of $G$ is at most its \emph{product dimension}. This connection helps us in showing that the local boxicity of the \emph{Kneser graph} $K(n,k)$ is at most $\frac{k}{2} \log{\log{n}}$. 
\end{itemize}
The above results can be extended to the \emph{local dimension} of a partially ordered set due to the known connection between local boxicity and local dimension. Finally, we show that the \emph{cubicity} of a graph on $n$ vertices of girth greater than $g+1$ is $O(n^{\frac{1}{\lfloor g/2\rfloor}}\log n)$. 
\end{abstract}
\begin{keyword}local boxicity \sep local dimension\sep boxicity\sep poset dimension\sep cubicity\sep product dimension\sep girth.
\end{keyword}
\maketitle
\section{Introduction}
A $k$-dimensional
box or a \emph{$k$-box} is defined as the Cartesian product of closed intervals $[a_1, b_1]×[a_2, b_2]\times \cdots \times [a_k, b_k]$. A \emph{$k$-box representation} of a graph $G$ is a mapping of the vertices of $G$ to $k$-boxes in the $k$-dimensional Euclidean space such that two vertices in $G$ are adjacent if and only if their corresponding $k$-boxes have a non-empty intersection. 
\begin{definition}[Boxicity of a Graph]
The boxicity of a graph $G$, denoted by $\boxicity(G)$, is the minimum positive integer $k$ such that $G$ has a $k$-box representation. 
\end{definition}
A graph is an \emph{interval graph} if it has a $1$-box representation. An \emph{interval representation} or $1$-box representation of an interval graph $I$ is a mapping $f$ of each vertex in $I$ to a closed interval on the real line in such a way that $\forall u,v\in V(I), uv \in E(I)$ if and only if  $f(u)$ intersects $f(v)$. It is known that an interval graph may have multiple interval representations. Let $G=(V,E)$ be any graph and $G_i$, $1 \leq i \leq k$ be graphs on the same vertex set as $G$ such that $E(G) = E(G_1) \cap E(G_2) \cap \cdots \cap E(G_k)$. Then $G$ is the \emph{intersection graph of $G_i$s} and denoted by the equation $G = \cap_{i=1}^k G_i$.
Projecting the $k$-boxes in a $k$-box representation of a graph into various axes  yields an alternate definition of the boxicity of a graph which can be stated in terms of intersection of interval graphs. 
\begin{definition}[Alternate definition of boxicity, Roberts \cite{roberts1969boxicity}]
The boxicity of a graph $G$ is the minimum positive integer $k$ such that $G$ is the intersection graph of $k$ interval graphs.
\end{definition} 
It follows from the above definition of boxicity that if $G = \cap_{i=1}^k G_i$, for some graphs $G_i$, then $\boxicity(G) \leq \sum \limits_{i=1}^k \boxicity(G_i)$. The notion of boxicity was first introduced by Roberts \cite{roberts1969boxicity} in the year 1969. Cozzens \cite{cozzens1982higher} showed that computing the boxicity of a graph is NP-hard. Kratochvil \cite{kratochvil1994special} showed that determining whether the boxicity of a graph is at most two is NP-complete. See \cite{chandran2010geometric, doi:10.1137/100786290, SUNILCHANDRAN2008443, 2018arXiv180403271S} for more results on the boxicity of a graph. 
\subsection{Local boxicity of a graph}
\label{subsec:local_box}
This paper revolves around a newly introduced graph parameter, \emph{local boxicity}, which is a variant of boxicity. The notion of local boxicity was first introduced by Bl{\"a}sius, Stumpf, and Ueckerdt \cite{blasius2018local}. An \emph{l-dimensional local box} or an \emph{l-local box} is defined as the cartesian product of intervals, $\lambda_1 \times \lambda_2 \times \cdots \times \lambda_k$, $k \geq l$, where at least $k-l$ of these intervals are equal to the entire real line $\mathbb{R}$.
\begin{definition}[l-local box representation] 
A l-local box representation of a graph $G$ is a mapping of the vertices of $G$ to $l$-local boxes in the $k$-dimensional Euclidean space, $l \leq k$, such that two vertices of $G$ are adjacent in $G$ if and only if their corresponding $l$-local boxes have a non-empty intersection.
\end{definition}
\begin{definition}[Local boxicity of a graph]
The local boxicity of a graph G, denoted by $\lbox(G)$, is the minimum positive integer $l$ such that G has an l-local box representation.
\end{definition}
Projecting the $l$-local boxes in an $l$-local box representation of a graph into various axes  yields the following alternate definition of the local boxicity of a graph. 
\begin{definition}[Alternate definition of local boxicity]
Local boxicity of a graph $G$, denoted by $\lbox(G)$, is the smallest positive integer $l$ such that $G = \cap_{i=1}^k I_i$, where each $I_i$ is an interval graph and each vertex of $G$ appears as a non-universal vertex in at most $l$ interval graphs in the collection $\{I_1,I_2,\ldots, I_k\}$.
\end{definition}
It follows from their definitions that $\lbox(G) \leq \boxicity(G)$. The notion of local boxicity is useful in space efficient representation (or storage) of dense graphs having small local boxicity. As an example, consider Roberts' graph (Roberts' graph $R_n$ is the graph obtained by removing a perfect matching from a complete graph on $2n$ vertices) having boxicity $n$. Representing this graph by storing every interval graph whose intersection is the original graph requires $\Omega(n^2)$ space. Any conventional representation of the Roberts' graph using an adjacency matrix or adjacency list also requires $\Omega(n^2)$ space. However, since the local boxicity of this graph is $1$, there is a way to represent it in $O(n \log{n})$ space. A graph $G$ having local boxicity $l$ can be represented or stored in the following manner. For each vertex $v$ in the graph, an array $B_v[l][3]$ of size $l \times 3$ is maintained whose entries are as follows. For $1 \leq i \leq l$, $B_v[i][1] = $ identifier of the $i^{th}$ interval graph that contains v as a non-universal vertex, $B_v[i][2] = $ left endpoint of the interval corresponding to v in the $i^{th}$ interval graph where v is present as a non-universal vertex, and $B_v[i][3] = $ right endpoint of the interval corresponding to v in the $i^{th}$ interval graph where v is present as a non-universal vertex. Each of the entries of this array requires $O(\log{n})$ bits, where $n$ is the number of vertices in $G$. Hence, the total space required to represent $G$ is in $O(nl\log{n})$. Note that if $l$ is a constant (i.e. local boxicity of $G$ is constant) then we get a space efficient representation in $O(n\log{n})$ space.

\subsection{Dimension and local dimension of a poset}
Here we introduce the reader to the notions of the dimension and the local dimension of a partially ordered set and then in Section \ref{sec:local_box_local_dim} show its relation with the graph parameters boxicity and local boxicity introduced above. A partially ordered set or \emph{poset} $\mathcal{P}=(X,\preceq)$ is a tuple, where $X$ denotes a finite or infinite set, and $\preceq$ is a binary relation on the elements of $X$. The binary relation $\preceq$ is reflexive, anti-symmetric and transitive.
For any two elements $x,y \in X,\ x$ is said to be \emph{comparable} with $y$ if either $x \preceq y$ or $y \preceq x$. If two elements $x,y \in X$ are comparable then either $(x, y)$ is an ordered pair in $\mathcal{P}$ if $x \preceq y$ or $(y, x)$ is an ordered pair in $\mathcal{P}$ if $y \preceq x$. Otherwise, if neither $a \preceq b$ nor $b \preceq a$,
then $a$ and $b$ are called \emph{incomparable elements} of $\mathcal{P}$. In this paper, we only deal with finite posets. A \emph{linear order} is a partial order where every two elements are comparable with each other. A linear order is also called a \emph{chain} in the literature. If a partial order $\mathcal{P}=(X,\preceq)$ and a linear order $L=(X,\preceq)$ are both defined on the same set $X$, and if every ordered pair in $\mathcal{P}$ is also present in $L$, then $L$ is called a \emph{linear extension} of $\mathcal{P}$.
A collection of linear orders, say $\mathcal{L}=\{L_1,\ L_2,\ \ldots,\ L_k\}$ with each $L_i$ defined on $X$, is said to \emph{realize} a poset $\mathcal{P}=(X,\preceq)$ if, for every two distinct elements $x,y \in X$, $x \preceq y \in \mathcal{P}$ if and only if $x \preceq_{L_i} y,\ \forall L_i \in \mathcal{L}$, where $x \preceq_{L_i} y$ denotes that $x \preceq y$ is in $L_i$. We call $\mathcal{L}$ a \emph{realizer} for $\mathcal{P}$. The \emph{dimension of a poset} $\mathcal{P}$, denoted by $\posetdim(\mathcal{P})$, is defined as the minimum cardinality of a realizer for $\mathcal{P}$. The concept of poset dimension was first introduced by Dushnik and Miller \cite{dushnik1941partially} and was extensively studied by researchers since then \cite{trotter1995partially, erdos1991dimension, furedi1986dimensions, 2018arXiv180403271S}.

The notion of \emph{local dimension} was introduced recently by Ueckerdt \cite{newdim} at the \emph{Order and Geometry Workshop, 2016}. Local dimension of a poset is a variation of the standard poset dimension. The definition of local dimension originates from the concepts studied by Bl{\"a}sius, Peter Stumpf and Torsten Ueckerdt \cite{blasius2018local}, and Knauer and Ueckerdt\cite{KNAUER2016745}. A \emph{partial linear extension or ple} of a poset $\mathcal{P}$ is defined as a linear extension of any subposet of $\mathcal{P}$. 
\begin{definition}[Local Realizer]
A local realizer of a poset $\mathcal{P}$ is a family $\mathcal{L}=\{L_1,L_2,\ldots,L_l\}$ of ple's of $\mathcal{P}$ such that following conditions hold.
\begin{enumerate}
    \item If $x \preceq y$ in $\mathcal{P}$ then there exists at least one ple $L_i \in \mathcal{L}$ such that $x \preceq_{L_i} y$.
    \item If $x$ and $y$ are two incomparable elements of the poset $\mathcal{P}$, then there exist ple's $L_i, L_j \in \mathcal{L}$ such that $x \preceq_{L_i} y$ and $y \preceq_{L_j} x$.
\end{enumerate}
\end{definition}
Given a local realizer $\mathcal{L}$ of $\mathcal{P}$ and an element $x \in \mathcal{P}$, the frequency of $x$ in $\mathcal{L}$, denoted by $\mu_x(\mathcal{L})$, is defined as the number of ple's in $\mathcal{L}$ that contain $x$ as an element. The maximum frequency of a local realizer is denoted by $\mu(\mathcal{L})=\maximum \limits_{x \in \mathcal{P}} \mu_x(\mathcal{L})$.
\begin{definition}[Local Dimension]
The local dimension of a poset $\mathcal{P}$, denoted by $\ldim(\mathcal{P})$, is defined as $\minimum \mu(\mathcal{L})$ where the minimum is taken over all the local realizers $\mathcal{L}$ of $\mathcal{P}$.  
\end{definition}
For a poset $\mathcal{P}$, it follows from their definitions that   $\ldim(\mathcal{P}) \leq \posetdim(\mathcal{P})$. See \cite{TROTTER20171047, kim2018difference, barrera2019comparing, bosek2017local} for more on the local dimension of a poset. 
The notion of local dimension can be useful in space efficient representation (or storage) of dense posets having small local dimension. For example, consider the dense crown poset $S_n$ ($S_n$ is a height $2$ poset with $n$ maximal elements $b_1,b_2, \ldots, b_n$, $n$ minimal elements $a_1,a_2, \ldots, a_n$, and $a_i \preceq b_j$ for $i \neq j$. $S_n$ is considered as a standard example in the literature of poset dimension.) having $\posetdim(S_n) = n$. Representing such a poset by either storing every relation in the partial order or by storing a realizer requires $\Omega(n^2)$ space. However, since $\ldim(S_n) = 3$, there is a way to represent $S_n$ in $O(n \log{n})$ space. A poset $\mathcal{P}$ having local dimension, $\ldim(\mathcal{P})=p$ can be represented in following manner. For each element $x$ in the ground set of $\mathcal{P}$ a $2$-D array $A_x[p][2]$ of size $p \times 2$ is maintained whose entries are the following. For $1 \leq i \leq p$, $A_x[i][1] =$ identifier of the $i^{th}$ ple that contains $x$, and $A_x[i][2] =$ position of $x$ in $i^{th}$ ple. Each of the entries of this array requires $O(\log{n})$ bits to store the information, where $n$ is the number of elements in $\mathcal{P}$. Therefore, the total space required to represent $\mathcal{P}$ is in $O(np\log{n})$. Note that if $p$ is a constant (i.e. local dimension of $\mathcal{P}$ is constant) then we get a space efficient representation in $O(n\log{n})$ space. 
\subsection{Local boxicity and local dimension} 
\label{sec:local_box_local_dim}
A simple undirected graph $G_{\mathcal{P}}$ is the underlying comparability graph of a poset $\mathcal{P} = (X, \preceq)$ if $X$ is the vertex set of $G_{\mathcal{P}}$ and two vertices are adjacent in $G_{\mathcal{P}}$ if and only if they are comparable in $\mathcal{P}$. 
Let $\mathcal{P}$ be a poset and $G_{\mathcal{P}}$ be the underlying comparability graph of $\mathcal{P}$. Adiga, Bhowmick, and Chandran \cite{doi:10.1137/100786290} proved the following theorem.
\begin{theorem}[Adiga, Bhowmick, and Chandran \cite{doi:10.1137/100786290}]
\label{adigathm}
Let $\chi(G_{\mathcal{P}})$ be the chromatic number of $G_{\mathcal{P}}$ and $\chi(G_{\mathcal{P}}) \neq 1$. Then, $\frac{\boxicity(G_{\mathcal{P}})}{\chi(G_{\mathcal{P}})-1} \leq \posetdim(\mathcal{P}) \leq 2\ \boxicity(G_{\mathcal{P}})$.
\end{theorem}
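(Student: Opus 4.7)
I would establish the two inequalities separately: the upper bound $dim(\mathcal{P}) \leq 2\,box(G_\mathcal{P})$ and the lower bound $box(G_\mathcal{P}) \leq (\chi(G_\mathcal{P}) - 1)\,dim(\mathcal{P})$.

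\textbf{Upper bound.} Fix a box representation $G_\mathcal{P} = \bigcap_{i=1}^{b}\mathcal{I}_i$ with $b = box(G_\mathcal{P})$, and fix an interval representation $f_i(v) = [l_v^i, r_v^i]$ of each $\mathcal{I}_i$. From each $\mathcal{I}_i$ define the associated interval order $Q_i$ by $x <_{Q_i} y \iff r_x^i < l_y^i$. Whenever $x \preceq y$ in $\mathcal{P}$, the edge $xy$ lies in $G_\mathcal{P} \subseteq \mathcal{I}_i$, so $f_i(x) \cap f_i(y) \neq \emptyset$ and $x, y$ are $Q_i$-incomparable; hence $\mathcal{P}$ and $Q_i$ never orient a pair in opposite directions. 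The first step is to show that $\preceq \cup <_{Q_i}$ has an acyclic transitive closure, giving a common refinement $R_i$ of $\mathcal{P}$ and $Q_i$. Since every interval order has dimension at most two, $Q_i$ is the intersection of two of its linear extensions; I would lift these to two linear extensions $L_i^{+}, L_i^{-}$ of $R_i$ (and therefore of $\mathcal{P}$) whose intersection still equals $Q_i$. The resulting $2b$ linear extensions form a realizer of $\mathcal{P}$: any incomparable pair $\{x,y\}$ is a non-edge of $G_\mathcal{P}$, hence a non-edge of some $\mathcal{I}_i$, hence $Q_i$-comparable, hence appears in opposite orders in $L_i^{+}$ and $L_i^{-}$.

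\textbf{Lower bound.} Let $d = dim(\mathcal{P})$, fix a realizer $\{L_1,\ldots,L_d\}$ of $\mathcal{P}$, and fix a proper coloring $c: V \to \{1,\ldots,\chi\}$ of $G_\mathcal{P}$ with $\chi = \chi(G_\mathcal{P})$. For each $(j,k) \in \{1,\ldots,d\} \times \{1,\ldots,\chi-1\}$ I would construct an interval graph $\mathcal{I}_{j,k} \supseteq G_\mathcal{P}$ designed so that $\mathcal{I}_{j,k}$ witnesses (i.e., makes non-adjacent) exactly those non-edges $\{x,y\}$ of $G_\mathcal{P}$ for which $c(x) \leq k < c(y)$ and $x \prec_{L_j} y$. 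Concretely, assign each $v$ an interval determined by its position in $L_j$ together with the indicator $[c(v) \leq k]$, so that (i) any two vertices on the same side of the color threshold receive overlapping intervals (they may well be $G_\mathcal{P}$-adjacent since they have different colors whenever they are) and (ii) a vertex from the ``low'' side ends its interval before a later (in $L_j$) vertex from the ``high'' side starts whenever the pair is incomparable in $\mathcal{P}$. I would then verify $G_\mathcal{P} = \bigcap_{j,k}\mathcal{I}_{j,k}$ by checking that every incomparable pair is reversed by some $L_j$ and split by some color threshold $k$, since $c(x)\neq c(y)$ need not hold for non-edges but running $k$ over $\{1,\ldots,\chi-1\}$ covers the case $c(x)=c(y)$ as well.

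\textbf{Main obstacle.} The principal difficulty lies in the upper bound, specifically in showing that the two linear extensions realizing $Q_i$ can be jointly lifted to linear extensions of $\mathcal{P}$ while still intersecting to $Q_i$; this forces a careful proof that $\mathcal{P} \cup Q_i$ has no induced cycle (a short alternating case analysis using the interval endpoints) and a constructive argument that the dimension-two witness of $Q_i$ survives the added $\mathcal{P}$-constraints. On the lower-bound side the delicate point is the interval assignment in $\mathcal{I}_{j,k}$: it must be simultaneously consistent with the linear extension $L_j$, with the coloring threshold $k$, and with containing every edge of $G_\mathcal{P}$, and one must verify in particular that each $\mathcal{I}_{j,k}$ is actually an interval graph and not merely a supergraph of $G_\mathcal{P}$.
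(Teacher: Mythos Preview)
The paper does not prove this theorem; it is quoted from Adiga, Bhowmick, and Chandran \cite{adiga2011boxicity} with no argument given. So there is no in-paper proof to compare against, but your outline has two genuine gaps.

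\textbf{Upper bound.} You take $L_i^{+},L_i^{-}$ to be linear extensions of $R_i$, where $R_i$ is a common refinement of $\mathcal P$ and $Q_i$, and then assert both that $L_i^{+}\cap L_i^{-}=Q_i$ and that a $Q_i$-\emph{comparable} pair appears in \emph{opposite} orders in $L_i^{+}$ and $L_i^{-}$. These two claims are mutually inconsistent: any two linear extensions of $R_i$ intersect in a superset of $R_i\supsetneq Q_i$ (whenever $\mathcal P$ has a single relation), and every pair comparable in $Q_i$ lies in the \emph{same} order in every extension of $Q_i$. What the argument actually requires is one linear extension of (the transitive closure of) $\mathcal P\cup Q_i$ and one of $\mathcal P\cup Q_i^{\mathrm{op}}$; then a $\mathcal P$-incomparable, $Q_i$-comparable pair is indeed flipped between the two, and the acyclicity check must be done for both unions. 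The ``interval orders have dimension at most~$2$'' fact is not the lever you want here.

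\textbf{Lower bound.} Your construction cannot separate an incomparable pair $x,y$ with $c(x)=c(y)$, and the final sentence claiming it does is false. For every threshold $k$ the indicators $[c(x)\le k]$ and $[c(y)\le k]$ coincide, so $x$ and $y$ lie on the same side, and by your property~(i) they receive overlapping intervals in \emph{every} $\mathcal I_{j,k}$. Since a proper colouring of $G_{\mathcal P}$ puts some incomparable pair in the same colour class whenever $G_{\mathcal P}$ is not complete, $\bigcap_{j,k}\mathcal I_{j,k}$ is then a strict supergraph of $G_{\mathcal P}$. Sweeping $k$ over $\{1,\ldots,\chi-1\}$ does nothing for such pairs; the interval assignment has to use the positions in $L_j$ in a way that can also distinguish same-colour vertices, which is incompatible with property~(i) as you have stated it.
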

A similar result connecting $\lbox(G_{\mathcal{P}})$ and $\ldim(\mathcal{P})$ was shown by Ragheb  \cite{ragheb2017local}.
\begin{lemma}[Ragheb \cite{ragheb2017local}]
\label{lemmaragheb}
$$\frac{\lbox(G_{\mathcal{P}})}{\chi(G_{\mathcal{P}})} \leq \ldim(\mathcal{P}) \leq 2\ \lbox(G_{\mathcal{P}}) + 1,$$
when $\chi(G_{\mathcal{P}}) \neq 1$.
\end{lemma} 
\subsection{Our contribution}
We know that the local boxicity of a graph is at most its boxicity. Thus all the known upper bounds for the boxicity of a graph also hold for its local boxicity. In this manuscript, we show some improved upper bounds for local boxicity. We prove the following results about the local boxicity of a graph. 
\begin{itemize}
\item Finding an upper bound for the boxicity of a graph solely in terms of its maximum degree has been extensively studied \cite{SUNILCHANDRAN2008443, esperet2009boxicity, doi:10.1137/100786290}. Let $\boxicity(\Delta)$ (or $\lbox(\Delta)$) denote the maximum boxicity (or local boxicity) of a graph having maximum degree $\Delta$.   Very recently, Scott and Wood \cite{2018arXiv180403271S} showed that $\boxicity(\Delta) = O(\Delta \log^{1+o(1)}\Delta)$. It is known due to Erd{\H{o}}s,  Kierstead, and Trotter \cite{erdos1991dimension} and Adiga, Bhowmick and Chandran \cite{doi:10.1137/100786290} that $\boxicity(\Delta) = \Omega(\Delta\log\Delta)$. \change{As for local boxicity, we show in Section \ref{subsec:max_degree_general_upperbound} that $\lbox(\Delta) \leq 2^{13\log^{*}{\Delta}} \Delta$.} Further, with the help of the result due to Kim et al. \cite{kim2018difference} on local dimension, we show that $\lbox(\Delta) = \Omega(\frac{\Delta}{\log\Delta})$.\footnote{Very recently, Esperet and Lichev \cite{esperet2020local} showed that $\lbox(\Delta) \in \Theta(\Delta)$.  } 
\item Bounding boxicity of \emph{line graphs} in terms of its maximum degree has been extensively studied in the literature. The results of Alon et al. \cite{alon2015separation} and Scott and Wood  \cite{scott2018separation} imply that the maximum local boxicity of a line graph of maximum degree $\Delta$ is $\Theta(\Delta)$. We know that line graphs belong to the class of \emph{claw-free graphs}. Using an easy constructive proof, we show in Section \emph{\ref{subsec:claw-free}} that  the local boxicity of a claw-free graph is at most $2\Delta$. We have an algorithm that gives a $2\Delta$-local box representation in $O(n\Delta^2)$ time, where $n$ is the number of vertices of the claw-free graph under consideration.
\item Esperet \cite{esperet2016boxicity} showed that every graph on $m$ edges has boxicity $O(\sqrt{m \log{m}})$. Further, in the same paper it was shown that this bound is asymptotically tight. In Section \emph{\ref{sec:size_of_a_graph}}, \change{we show that the local boxicity of a graph is  at most $(2^{13\log^{*}{\sqrt{m}}} + 2 )\sqrt{m}$.} We show the existence of graphs with $m$ edges having a local boxicity of $\Omega(\frac{\sqrt{m}}{\log m})$.
\item Roberts \cite{roberts1969boxicity} showed the the maximum boxicity of a graph on $n$ vertices is $\Theta(n)$.  In Section \ref{sec:order_of_a_graph}, we show that the maximum local boxicity of a graph on $n$ vertices is $\Theta(\frac{n}{\log{n}})$. 
\item In Section \emph{\ref{sec:prod_dimension}}, we show that the local boxicity of a graph is bounded from above by its \emph{product dimension}. This connection helps us in showing that the local boxicity of the \emph{Kneser graph} $K(n,k)$ is at most $\frac{k}{2}\log{\log{n}}$. 
\end{itemize}
The table below summarizes our results on the local boxicity of a graph listed above. With the help of Lemma~\ref{lemmaragheb} that relates the parameters local dimension and local boxicity, every upper bound that we establish for local boxicity can be extended to local dimension. 
\begin{center}
\resizebox{\linewidth}{!}{
\begin{tabular}{|c|c|c|}
\hline
\textbf{Maximum boxicity of $G$} & \textbf{Maximum local boxicity of $G$} & \textbf{Remarks}\\
\hline
$O(\Delta \log^{1+o(1)}\Delta)$, see \cite{2018arXiv180403271S} &  $\change{ 2^{13\log^{*}{\Delta}} \Delta}$, see Corollary \ref{lboxcor} & $\Delta$ is the max degree of $G$ 
\\
\hline
$\Omega(\Delta \log{\Delta})$, see \cite{erdos1991dimension} & $\Omega(\frac{\Delta}{\log \Delta})$, see Corollary \ref{lboxcor} & $\Delta$ is the max degree of $G$ \\
\hline
$O(\sqrt{m \log{m}})$, see \cite{esperet2016boxicity} &  $\change{(2^{13\log^{*}{\sqrt{m}}} + 2 )\sqrt{m}}$, see Corollary \ref{cor_lbox_edges} & $m$ denotes the no. of edges in $G$ \\
\hline
$\Omega(\sqrt{m \log m})$, see \cite{esperet2016boxicity} &  $\Omega(\frac{\sqrt{m}}{\log m})$, see Corollary \ref{cor_lbox_edges} & $m$ denotes the no. of edges in $G$ \\
\hline 
$\Theta(n)$, see \cite{roberts1969boxicity} &  $\Theta(\frac{n}{\log{n}})$, see Corollary \ref{cor_lbox_vertices} & $n$ denotes the no. of vertices in $G$ \\
\hline
--& $\leq d$, see Theorem \ref{lboxProdDim} & $d$ is the product dimension of $G$ \\
\hline
--& $\leq \frac{k}{2} \log \log n$, see Corollary \ref{lboxProdDimKneser} & when $G$ is the Kneser graph $K(n,k)$ \\
\hline
\end{tabular}
}
\end{center}
Finally, in Section \emph{\ref{Sec:girth}}, we study \emph{cubicity} (a parameter about cube representation of graphs. See Section \ref{Sec:girth} for the definition of cubicity.) of graphs of high girth. The boxicity of a graph is known to be at most its cubicity. We show that the cubicity of a graph on $n
$ vertices  having girth greater than $g+1$ is $O(n^{\frac{1}{\lfloor g/2 \rfloor}}\log n)$.
\subsection{Notations used}
Unless mentioned explicitly, all logarithms used in the paper are to the base $2$. For any positive integer $n$, we use $[n]$ to denote $\{1, \ldots , n\}$. Given a graph $G$, we shall use $V(G), E(G),$ and $\Delta(G)$ to denote its vertex set, edge set, and  maximum degree, respectively. For any $v \in V(G)$, we use $N_G(v)$ to denote the neighborhood of $v$, i.e., $N_G(v) = \{u \in V(G)~|~vu \in E(G)\}$. 
Given a graph $G$, we use $G[S]$ to denote the subgraph induced on $G$ by the vertex set $S$ for any $S \subseteq V(G)$. Similarly, we use $G[S \cup T]$ to denote the subgraph induced on $G$ by the vertex set $S \cup T$ for any $S,T \subseteq V(G)$. Let $G[S,T]$ denote the bipartite subgraph of the graph $G$ with vertex set, $V(G[S,T]) = (S \cup T)$ and edge set, $E(G[S,T]) = E(G[S \cup T])\setminus ( E(G[S]) \cup E(G[T]) )$. 

\section{Local boxicity, local dimension, and maximum degree}
In this section, we discuss the connection between the local boxicity of  a graph and its maximum degree. We begin by proving the following generalized lemma for computing the local boxicity of a graph whose vertex set is partitioned into disjoint parts.
\change{
\begin{lemma}
\label{GenLemma}
Consider a graph $G$ whose vertex set is partitioned into $k$ parts, namely $V_1, V_2, \ldots, V_k$. Let $\underset{1 \leq i < j \leq k}{\maximum}(\lbox(G[V_i \cup V_j])) = s$. Then, $\lbox(G) \leq (k-1)s$.
\end{lemma}
\begin{proof}
Let $\mathcal{I}_{i \cup j}$ denote a collection of interval graphs that corresponds to a $s$-local box representation of $G[V_i \cup V_j]$ where the vertices $v \notin V_i,V_j$ are universal in all the interval graphs in the collection. That is,
$G[V_i \cup V_j] = \bigcap \limits_{I \in \mathcal{I}_{i \cup j}} I$. It is easy to see that $G$ can be represented as follows. $$G =  \bigcap \limits_{1 \leq i < j \leq k}(\bigcap \limits_{I \in \mathcal{I}_{i \cup j}} I).$$ In such a representation, every vertex $v \in V_i$ appears as a universal vertex in every $I \in \mathcal{I}_{a \cup b}$, where $i \notin \{a,b\}$. Therefore, the maximum number of times a vertex $v \in V(G)$ is present as a non-universal vertex in the collection $\mathcal{I}_{i \cup j}$, where $1 \leq i < j \leq k$, is $(k-1)s$. Hence, the local boxicity of $G$, $\lbox(G) \leq (k-1)s$.
\end{proof}
}
\subsection{Upper bound in terms of maximum degree}
\label{subsec:max_degree_general_upperbound}
\change{
The following partitioning lemma is due to Alon et al. \cite{scott2018separation}. 
\begin{lemma}
(Lemma 3 in \cite{alon2015separation})
\label{alon_lemma}
For a graph $G$ with maximum degree $\Delta \geq 2^{64}$, there exists a partition of $V(G)$ into $r$ parts, where $r=\lceil\frac{400\Delta}{\log{\Delta}}\rceil$, such that for every vertex $v \in V(G)$ and for every part $V_i, i \in [r]$,$|N_G(v) \cap V_i| \leq \frac{1}{2}\log{\Delta}$.
\end{lemma}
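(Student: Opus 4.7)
The plan is a standard probabilistic argument: assign each vertex of $G$ independently and uniformly at random to one of the $r$ parts, and then use the Lovász Local Lemma to show that with positive probability no part is ``too crowded'' in any neighborhood. So first I would set up, for each vertex $v\in V(G)$ and each index $i\in[r]$, the bad event $A_{v,i}$ that $|N_G(v)\cap V_i|>\tfrac12\log\Delta$. The random variable $X_{v,i}=|N_G(v)\cap V_i|$ is a sum of at most $\Delta$ independent indicators each of probability $1/r$, so it is stochastically dominated by $\mathrm{Bin}(\Delta,1/r)$ with mean $\mu\le \Delta/r\le \tfrac{\log\Delta}{400}$.

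Next I would bound $\Pr[A_{v,i}]$ by a Chernoff-type inequality of the form $\Pr[X\ge t\mu]\le (e/t)^{t\mu}$ valid for $t\ge e$. Taking $t\mu=\tfrac12\log\Delta$ forces $t\ge 200$, so
\[
\Pr[A_{v,i}]\;\le\;\Bigl(\tfrac{e}{200}\Bigr)^{\frac12\log\Delta}\;=\;\Delta^{\frac12\log_2(e/200)}\;\le\;\Delta^{-3},
\]
using $\log_2(e/200)<-6$. The hypothesis $\Delta\ge 2^{64}$ is the safety cushion that lets one replace ``for large enough $\Delta$'' with a concrete threshold and keeps the Chernoff regime honest.

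Then I would invoke the symmetric Lovász Local Lemma. The event $A_{v,i}$ is determined by the part assignments of the vertices in $N_G(v)$, so $A_{v,i}$ is mutually independent of all events $A_{u,j}$ with $N_G(u)\cap N_G(v)=\emptyset$. The number of vertices $u$ with $N_G(u)\cap N_G(v)\neq\emptyset$ is at most $\Delta^2$, and $j$ ranges over $[r]$, so the dependency degree satisfies $D\le r\Delta^2$. Since $r=\lceil 400\Delta/\log\Delta\rceil\le \Delta$ for $\Delta\ge 2^{64}$, the LLL condition
\[
e\cdot p\cdot D\;\le\;e\cdot \Delta^{-3}\cdot r\Delta^2\;\le\;e\cdot \tfrac{400}{\log\Delta}\;\le\;1
\]
holds comfortably, and the LLL produces an assignment in which no $A_{v,i}$ occurs; that assignment is the desired partition.

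I expect the main obstacle to be purely quantitative: choosing the Chernoff tail bound cleanly enough so that the resulting per-event probability $p$ beats $1/(eD)$ without having to sharpen the constant $400$. Once one has a single-tail bound of the form $p\le \Delta^{-(2+\varepsilon)}$ for some $\varepsilon>0$, the fact that $r$ grows only as $\Delta/\log\Delta$ (rather than $\Delta$) gives the slack one needs for the LLL to go through, and the threshold $\Delta\ge 2^{64}$ absorbs all rounding/lower-order terms.
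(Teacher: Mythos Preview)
The paper does not prove this lemma; it is quoted verbatim from Alon, Basavaraju, Chandran, Mathew, and Rajendraprasad (2015), so there is no in-paper argument to compare against. Your random-partition-plus-LLL outline is exactly the standard proof (and essentially the one in that reference), and the structure is correct.

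There is, however, a genuine numerical slip in your final LLL verification. After rounding the Chernoff bound to $p\le \Delta^{-3}$ you write
\[
e\cdot p\cdot D\ \le\ e\cdot \Delta^{-3}\cdot r\Delta^{2}\ =\ e\cdot\frac{r}{\Delta}\ \le\ e\cdot\frac{400}{\log\Delta}\ \le\ 1,
\]
but $e\cdot 400/\log\Delta\le 1$ needs $\log\Delta\ge 400e\approx 1087$, whereas the hypothesis only gives $\log\Delta\ge 64$; so as written the inequality fails for all $\Delta$ between $2^{64}$ and roughly $2^{1087}$. (Your side remark that $r=\lceil 400\Delta/\log\Delta\rceil\le\Delta$ for $\Delta\ge 2^{64}$ is likewise false in this range, though you do not actually use it.)

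The repair is simply not to throw away the extra $0.1$ in the Chernoff exponent. Your own computation gives $p\le \Delta^{\frac12\log_2(e/200)}\le \Delta^{-3.1}$, and keeping this yields
\[
e\cdot p\cdot D\ \le\ e\cdot\frac{r}{\Delta^{1.1}}\ \le\ e\cdot\frac{401}{\log\Delta}\cdot\Delta^{-0.1},
\]
which at $\Delta=2^{64}$ is about $0.2$ and decreases for larger $\Delta$. With that correction the LLL condition holds and your argument is complete.
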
       
We define $\lbox(\Delta):=max\{\lbox(G):maximum\ degree\ of\ G\ is\ \Delta\}$. 
\begin{theorem}
\label{lbox_maxdeg}
For every positive integer $\Delta$, $\lbox(\Delta) \leq 2^{13\log^{*}{\Delta}} \Delta$.
\end{theorem}
\begin{proof}
It is easy to see that the local boxicity of a graph with maximum degree $1$ is at most $1$. Assume $\Delta > 1$. For such graphs, we prove by induction on $\Delta$ that their boxicity is at most $2^{13\log^{*}{\Delta}} \Delta$.
\\
\emph{Base Case: $1 < \Delta \leq 2^{64}$.} 
Esperet \cite{esperet2009boxicity} showed that for a graph $G$ with maximum degree $\Delta$,
 $box(G) \leq \Delta^2 + 2$. 
When $1 < \Delta \leq 2^{64}$, one can verify that $\Delta \leq 2^{13\log^{*}{\Delta} - 1}$. We thus have $\Delta^2 + 2 < 2\Delta^2 \leq 2\Delta \cdot     2^{13\log^{*}{\Delta} - 1} \leq 2^{13\log^{*}{\Delta}} \Delta$. 
\\
\emph{Induction step:} Let $\Delta > 2^{64}$ and assume the theorem is true for every graph with maximum degree less than $\Delta$. Let $G$ be a graph with maximum degree $\Delta$. We partition $V(G)$ into $r$ parts, namely $V_1,V_2,\ldots,V_r$, where $r = \lceil \frac{400\Delta}{\log{\Delta}} \rceil$ and $|N_G(v) \cap V_i| \leq \frac{1}{2}\log{\Delta}$, $\forall v \in V (G), i \in [r]$. Existence of such a partition is guaranteed by \emph{Lemma \ref{alon_lemma}}. For any $i,j \in [r]$, since the maximum degree of $G[V_i \cup V_j]$ is at most $\log{\Delta}$, $\lbox(G[V_i \cup V_j]) \leq \lbox(\log{\Delta})$. Applying Lemma \ref{GenLemma}, we get
    \begin{align*}
        \lbox(\Delta) & \leq (r-1)\ \lbox(\log{\Delta}) \\
        & < \left\lceil \frac{400\Delta}{\log{\Delta}} \right\rceil\ \lbox(\log{\Delta})  \\
        & \leq 2^{13} \ \frac{\Delta}{\log{\Delta}} \ \lbox(\log{\Delta})  \\
        & \leq 2^{13} \ \frac{\Delta}{\log{\Delta}} \ 2^{13\log^{*}({\log{\Delta}})} \ \log{\Delta}  \\
        & =2^{13\log^{*}{\Delta}} \Delta.
    \end{align*}
\end{proof}
\begin{theorem}[Theorem 2, Kim et al. \cite{kim2018difference}]
\label{theroem2}
The maximum local dimension of a poset on $n$ points is $\Theta(n/\log n)$.
\end{theorem}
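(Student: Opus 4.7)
The plan is to prove matching upper and lower bounds, each of order $n/\log n$, and I would treat them separately.

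For the lower bound, I would run an information-theoretic counting argument on a random height-$2$ poset $\mathcal{P}_R$ with $n/2$ minimal and $n/2$ maximal elements in which each minimal--maximal pair is independently declared comparable with probability $1/2$; there are $2^{n^2/4}$ such posets. Given a local realizer $\mathcal{L}=\{L_1,\ldots,L_k\}$ with $\mu(\mathcal{L})\le\mu$, summing the lengths of the ple's gives $\sum_i|L_i|\le n\mu$, and since any ple of size $\le 1$ is useless we may assume $k\le n\mu/2$. Encoding such a realizer (list ple sizes, then for each slot write the element identifier) costs $O(n\mu\log n)$ bits. Because distinct posets force essentially distinct realizers, the number of posets with $ldim(\mathcal{P})\le\mu$ is at most $2^{O(n\mu\log n)}$, and comparing with $2^{n^2/4}$ forces $\mu=\Omega(n/\log n)$ for some instance in the support. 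The delicate step will be tightening the encoding argument so that two random posets really do require structurally different realizers (i.e., ruling out accidental sharing); I expect to handle this by working with canonical ``minimal'' realizers and a small amount of union-bound slack.

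For the upper bound I would give an explicit construction producing a local realizer of frequency $O(n/\log n)$ for any $n$-element poset $\mathcal{P}$. The first step is a bucketing strategy: partition $V(\mathcal{P})$ into $k=\Theta(n/\log n)$ buckets $B_1,\ldots,B_k$ of size $\Theta(\log n)$. For every pair $(B_i,B_j)$, the induced subposet $\mathcal{P}[B_i\cup B_j]$ has $O(\log n)$ elements, so by Hiraguchi's theorem it admits a realizer of size $O(\log n)$. The union of all pairwise realizers is visibly a local realizer of $\mathcal{P}$, since every incomparable (or comparable) pair of $\mathcal{P}$ lies inside some $B_i\cup B_j$.

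The main obstacle --- and the step I expect to be hardest --- is the amortization. The naive frequency bound is $(k-1)\cdot O(\log n)=O(n)$, off by a $\log^2 n$ factor from the target. To close this gap I would try to merge compatible ple's coming from different bucket pairs into single long ple's: two ple's can be concatenated (or interleaved) whenever their orders are consistent on overlaps and the merge creates no spurious comparability. Framing this as a coloring problem on a ``conflict graph'' of ple's and showing a proper coloring with $O(n/\log n)$ colors exists would finish the proof. As a sanity check and an alternative route, one could try to invoke $lbox(G)=O(n/\log n)$ for any $n$-vertex graph (Section~\ref{sec:order_of_a_graph}) together with Lemma~\ref{lemma1} to deduce $ldim(\mathcal{P})\le 2\,lbox(G_{\mathcal{P}})+1=O(n/\log n)$ directly, but one must be careful to avoid circularity if that graph-theoretic bound is itself derived from Theorem~\ref{theroem2}.
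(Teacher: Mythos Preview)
The paper does not prove this theorem; it is quoted from Kim et al.~\cite{kim2018difference} and used only as a black box in Corollaries~\ref{ldimcor} and~\ref{lboxcor} and in the corollary following Theorem~\ref{lboxonn}. There is no proof here to compare your proposal against.

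On the proposal itself:

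\textbf{Lower bound.} Your counting argument is sound, and the step you flag as ``delicate'' is in fact immediate. Because every ple is a linear extension of a subposet of $\mathcal{P}$, a comparable pair $x\preceq y$ can never appear reversed in any ple, whereas an incomparable pair must appear in both orders by condition~2 of the definition. Hence the local realizer \emph{determines} the poset, so the number of height-$2$ posets with $ldim\le\mu$ is at most the number of encodings of total length $O(n\mu\log n)$. No canonicalization or union-bound slack is required.

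\textbf{Upper bound.} The bucketing-plus-merging route has a genuine gap: bringing the frequency from $(k-1)\cdot O(\log n)=O(n)$ down to $O(n/\log n)$ via ``merge compatible ple's and color a conflict graph'' is not a proof, and you give no reason the conflict graph should have chromatic number $O(n/\log n)$. Your alternative route, however, is correct and \emph{not} circular: Theorem~\ref{lboxonn} in this paper is obtained directly from the Erd\H{o}s--Pyber decomposition (Theorem~\ref{erdospyber}) with no appeal to Theorem~\ref{theroem2}, and combining it with the right-hand inequality of Lemma~\ref{lemma1} gives $ldim(\mathcal{P})\le 2\,lbox(G_{\mathcal{P}})+1\le 48\,n/\log n+1$. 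Take that route and drop the bucketing attempt.
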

Let $\ldim(\Delta):= \maximum \{\ldim(\mathcal{P}):$ maximum degree of $G_{\mathcal{P}}$ is $\Delta\}$ where $G_{\mathcal{P}}$ is the underlying comparability graph of a poset $\mathcal{P}$. As the maximum degree of $G_{\mathcal{P}}$. Corollary~\ref{ldimcor} follows directly from Lemma~\ref{lemmaragheb}, Theorem~\ref{lbox_maxdeg}, and Theorem~\ref{theroem2}.
\begin{corollary}
\label{ldimcor}
$\ldim(\Delta) \in \Omega(\frac{\Delta}{\log \Delta})$. Further, $\ldim(\Delta) \leq2^{1+13\log^{*}{\Delta}} \Delta+1$. 
\end{corollary}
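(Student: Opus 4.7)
The plan is to derive both halves of the corollary by chaining together Lemma~\ref{lemma1}, Theorem~\ref{theorem6}, and Theorem~\ref{theroem2}; no new ideas are needed beyond these building blocks, which is why the corollary is stated as ``following directly.''

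For the upper bound, I would let $\mathcal{P}$ be an arbitrary poset whose comparability graph $G_{\mathcal{P}}$ has maximum degree $\Delta$. The right-hand inequality in Lemma~\ref{lemma1} gives $ldim(\mathcal{P}) \leq 2\,lbox(G_{\mathcal{P}}) + 1$, and since $G_{\mathcal{P}}$ has maximum degree $\Delta$, the definition of $lbox(\Delta)$ yields $lbox(G_{\mathcal{P}}) \leq lbox(\Delta)$. Theorem~\ref{theorem6} then supplies $lbox(\Delta) \leq 2^{9\log^{*}{\Delta}}\Delta$, so composing gives $ldim(\mathcal{P}) \leq 2 \cdot 2^{9\log^{*}{\Delta}}\Delta + 1 = 2^{1+9\log^{*}{\Delta}}\Delta + 1$. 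Taking the maximum over all admissible $\mathcal{P}$ delivers the stated upper bound on $ldim(\Delta)$.

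For the lower bound, given $\Delta$ I would set $n = \Delta+1$ and apply Theorem~\ref{theroem2} to obtain a poset $\mathcal{P}$ on $n$ points with $ldim(\mathcal{P}) \in \Omega(n/\log n) = \Omega(\Delta/\log \Delta)$. Since $|\mathcal{P}| = n = \Delta + 1$, the comparability graph $G_{\mathcal{P}}$ has maximum degree at most $\Delta$. To align with the convention that $ldim(\Delta)$ is taken over posets whose comparability graph has maximum degree \emph{exactly} $\Delta$, I would form the disjoint union $\mathcal{P}^{*}$ of $\mathcal{P}$ with an auxiliary poset of height $2$ consisting of one maximum element above $\Delta$ pairwise incomparable minimum elements; this forces $\Delta(G_{\mathcal{P}^{*}}) = \Delta$. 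Restricting any local realizer of $\mathcal{P}^{*}$ to the ground set of $\mathcal{P}$ is still a local realizer of $\mathcal{P}$, so $ldim(\mathcal{P}^{*}) \geq ldim(\mathcal{P}) \in \Omega(\Delta/\log \Delta)$, and hence $ldim(\Delta) \in \Omega(\Delta/\log\Delta)$.

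There is no genuine obstacle here: the only subtlety is the ``exactly $\Delta$'' convention in the definition of $ldim(\Delta)$, which the disjoint-union padding step handles cleanly. Everything else is a direct substitution into Lemma~\ref{lemma1} combined with Theorem~\ref{theorem6} on the upper side, and an invocation of Theorem~\ref{theroem2} on the lower side.
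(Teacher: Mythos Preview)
Your proposal is correct and follows the same route the paper intends: the paper gives no explicit argument beyond stating that the corollary ``follows directly from'' Lemma~\ref{lemma1}, Theorem~\ref{theorem6}, and Theorem~\ref{theroem2}, and your write-up is a faithful unpacking of that claim. The disjoint-union padding step to force the comparability graph to have maximum degree exactly $\Delta$ is a careful technical detail that the paper leaves implicit.
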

Corollary~\ref{lboxcor} follows directly from Lemma~\ref{lemmaragheb}, Theorem~\ref{lbox_maxdeg}, and Corollary~\ref{ldimcor}.
\begin{corollary}
\label{lboxcor}
$\lbox(\Delta) \in \Omega(\frac{\Delta}{\log \Delta})$. Further, $\lbox(\Delta) \leq 2^{13\log^{*}{\Delta}} \Delta$. 
\end{corollary}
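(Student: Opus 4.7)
The statement has two parts and both reduce to facts already in hand, so the proof is essentially a bookkeeping exercise rather than a new argument.

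The upper bound $lbox(\Delta) \leq 2^{9\log^{*}{\Delta}} \Delta$ is literally the conclusion of Theorem~\ref{theorem6}, so I would just cite that theorem with a single sentence. No new work is needed here; the inductive argument using the Alon et al.\ partitioning lemma has already done all the heavy lifting.

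For the lower bound $lbox(\Delta) \in \Omega(\Delta/\log \Delta)$, the plan is to transfer the lower bound on $ldim(\Delta)$ from Corollary~\ref{ldimcor} through the right-hand inequality of Lemma~\ref{lemma1}. Concretely, Corollary~\ref{ldimcor} guarantees a poset $\mathcal{P}$ whose comparability graph $G_{\mathcal{P}}$ has maximum degree $\Delta$ and satisfies $ldim(\mathcal{P}) \in \Omega(\Delta/\log \Delta)$. Applying $ldim(\mathcal{P}) \leq 2\,lbox(G_{\mathcal{P}}) + 1$ and rearranging gives $lbox(G_{\mathcal{P}}) \geq (ldim(\mathcal{P}) - 1)/2$, which is itself $\Omega(\Delta/\log \Delta)$. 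Since $G_{\mathcal{P}}$ is a graph of maximum degree $\Delta$, we have $lbox(\Delta) \geq lbox(G_{\mathcal{P}})$, which finishes the lower bound.

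I would expect no real obstacle here, because the main content of Corollary~\ref{lboxcor} is already contained in Theorem~\ref{theorem6} (upper bound) and Corollary~\ref{ldimcor} (lower bound, via the poset-to-graph transfer). The only subtlety worth flagging is that one must be careful that the $\chi(G_{\mathcal{P}}) \neq 1$ hypothesis of Lemma~\ref{lemma1} is met; this is automatic as soon as $\mathcal{P}$ has any comparable pair, and the posets achieving $ldim(\mathcal{P}) \in \Omega(\Delta/\log \Delta)$ certainly do.
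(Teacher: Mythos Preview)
Your proposal is correct and matches the paper's own argument exactly: the paper simply states that Corollary~\ref{lboxcor} follows directly from Lemma~\ref{lemma1}, Theorem~\ref{theorem6}, and Corollary~\ref{ldimcor}, and your write-up just unpacks how those three ingredients combine (Theorem~\ref{theorem6} for the upper bound, Corollary~\ref{ldimcor} plus the right-hand inequality of Lemma~\ref{lemma1} for the lower bound).
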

}
Bridging the gap between the upper and the lower bound for $\lbox(\Delta)$ given in Corollary~\ref{lboxcor} is certainly an interesting question. Very recently, Esperet and Lichev \cite{esperet2020local} have shown that $\lbox(\Delta) \in \Theta(\Delta)$.  

\subsection{Local boxicity and the size of a graph}
\label{sec:size_of_a_graph}
Esperet \cite{esperet2016boxicity} showed that every graph on $m$ edges has boxicity $O(\sqrt{m \log{m}})$. Further, in the same paper it is shown that this bound is asymptotically tight. In this section we explore how the local boxicity of a graph is connected with its size. 
\change{
\begin{corollary}
\label{cor_lbox_edges}
For a graph $G$ having $m$ edges, $\lbox(G) \leq (2^{13\log^{*}{\sqrt{m}}} + 2 )\sqrt{m}$. Further, there exists a graph with $m$ edges whose local boxicity is $\Omega(\frac{\sqrt{m}}{\log m})$. 
\end{corollary}
}
\begin{proof}
\change{
Let $V'$ denote the set of vertices having degree at least $\sqrt{m}$ in $G$. We have, $|V'| \leq 
\frac{2m}{\sqrt{m}} = 2\sqrt{m}$. Each vertex in $G[V(G)\setminus V']$ has degree at most 
$\sqrt{m}$ in $G$. From Corollary \ref{lboxcor} we have, $\lbox(G[V(G)\setminus V']) \leq 
2^{13\log^{*}{\sqrt{m}}} \sqrt{m}$. Therefore, $\lbox(G) \leq \lbox(G[V(G)\setminus V']) +
 2\sqrt{m} \leq 2^{13\log^{*}{\sqrt{m}}} \sqrt{m} + 2\sqrt{m} = (2^{13\log^{*}{\sqrt{m}}} + 2 )
 \sqrt{m}$.
}
\change{
Lemma 18 by Kim et al. \cite{kim2018difference} shows the existence of a height $2$ poset $
\mathcal{P}$, whose comparability graph has $n$ vertices and $\Omega(n^2)$ edges, having
  $\ldim(\mathcal{P}) \in \Omega(\frac{n}{\log n})$. Thus, $\ldim(\mathcal{P}) \in  
  \Omega(\frac{\sqrt{m}}{\log m})$ and, by Lemma~\ref{lemmaragheb}, $\lbox(G_{\mathcal{P}}) \in 
  \Omega(\frac{\sqrt{m}}{\log m})$.
 } 
\end{proof}
\change{
\begin{corollary}
Let $\mathcal{P}$ be a poset whose underlying comparability graph has $m$ edges. Then, $\ldim(\mathcal{P}) \leq (2^{13\log^{*}{\sqrt{m}}} + 2 )2\sqrt{m} + 1$.
\end{corollary}
}
 
\subsection{Constructing local box representations for claw-free graphs}
\label{subsec:claw-free}
Chandran, Mathew, and Sivadasan \cite{CHANDRAN20112359} showed that the boxicity of the line graph of a graph $G$ with maximum degree $\Delta$ is $O(\Delta \log{\log{\Delta}})$. The \emph{line graph} $L(G)$ of a graph $G$ is the graph with $V(L(G)) = E(G)$ and $E(L(G))=\{ef:e,f \in E(G)$, $e$ and $f$ share a common endpoint$\}$. 
Later Alon et al. \cite{alon2015separation} improved this bound to $O(2^{9\log^{*}{\Delta}}\Delta)$, where $\log^{*}{\Delta}$ denotes the iterated logarithm of $\Delta$, i.e., the number of times the log function is applied to get a result less than or equal to $1$. Scott and Wood \cite{scott2018separation} showed that  $\boxicity(L(G))$ of a graph $G$ with maximum degree $\Delta$ is at most $20 \Delta$, which is best possible upto a constant factor. Thus, boxicity of line graphs have been extensively studied. In this section, we study the local boxicity of claw-free graphs, a class of graphs which contains line graphs. 
A \emph{claw graph} is a complete bipartite graph $K_{1,3}$ with one part containing a single vertex and the other part containing three vertices. A \emph{claw-free graph} is a graph that contains no claw graph as its induced subgraph. 
We show that the local boxicity of a claw-free graph having a maximum degree of $\Delta$ is at most $2\Delta$. Our proof yields an algorithm for constructing $2\Delta$-local box representation for such graphs in $O(n \Delta^2)$ time.


\begin{theorem}
\label{lboxClaw}
Let $G$ be a claw-free graph and let $\chi(G)$ denote its chromatic number. Then, $\lbox(G) \leq 2(\chi(G)-1)$.
\end{theorem}
\begin{proof}
Based on an optimal vertex coloring of $G$, partition $V(G)$ into $\chi(G)$ color classes, namely $V_1, V_2, \ldots,$ $V_{\chi(G)}$. 
From Lemma~\ref{GenLemma} we know $\lbox(G) \leq (\chi(G)-1) \underset{1 \leq i < j \leq \chi(G)}{\maximum}(\lbox(G[V_i \cup V_j]))$. Since $G$ is claw-free, $G[V_i \cup V_j]$ has a maximum degree of at most $2$. That is, $G[V_i \cup V_j]$ is a disjoint union of paths and cycles. The local boxicity of a path is $1$ and that of a cycle is $2$. Thus, $\lbox(G) \leq 2(\chi(G) -1)$. 
\end{proof}
For a graph $G$ with maximum degree $\Delta$, since $\chi(G) \leq \Delta + 1$, we have the following corollary.
\begin{corollary}
\label{cor:claw-free}
Let $G$ be a claw-free graph of maximum degree $\Delta$. Then, $\lbox(G) \leq 2 \Delta$.
\end{corollary}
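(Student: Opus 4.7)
The plan is to derive this directly from the preceding theorem by combining it with the standard bound on the chromatic number in terms of the maximum degree. Specifically, the preceding theorem gives $lbox(G) \leq 3(\chi(G)-1)$ for any claw-free graph $G$, so it suffices to bound $\chi(G) - 1$ by $\Delta$.

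To get $\chi(G) \leq \Delta(G) + 1$, I would invoke the elementary greedy coloring argument: list the vertices of $G$ in any order $v_1, v_2, \ldots, v_n$, and assign to $v_i$ the smallest positive integer not used by any of its already-colored neighbors among $v_1, \ldots, v_{i-1}$. Since $v_i$ has at most $\Delta$ neighbors in total, one of the colors in $\{1, 2, \ldots, \Delta + 1\}$ is always free, giving a proper coloring with at most $\Delta + 1$ colors. Therefore $\chi(G) - 1 \leq \Delta$.

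Substituting this into the preceding theorem yields $lbox(G) \leq 3(\chi(G) - 1) \leq 3\Delta$, which is the desired conclusion. There is essentially no obstacle here: both ingredients are already in hand, one from the theorem just proved and the other from a completely standard greedy argument, so the corollary is immediate.
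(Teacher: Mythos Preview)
Your proposal is correct and matches the paper's intended derivation: the corollary is stated immediately after the theorem $lbox(G) \leq 3(\chi(G)-1)$ without further proof, so the implicit step is exactly the greedy bound $\chi(G) \leq \Delta + 1$ that you spell out.
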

In the proof of Theorem~\ref{lboxClaw}, a $2$-local box representation for $G[V_i \cup V_j]$ can be obtained in $O(n)$ time, where $n$ is the number of vertices in $G$. Thus, the proof of Theorem~\ref{lboxClaw} yields an algorithm for constructing a $2 \Delta$-local box representation for $G$ that runs in $O(n \Delta^2)$ time. 
\section{Local boxicity and the order of a graph}
\label{sec:order_of_a_graph}
It is known that the maximum boxicity of a graph on $n$ vertices is $\Theta(n)$ (see \cite{roberts1969boxicity}). In this section we show that the maximum local boxicity of a graph on $n$ vertices is $\Theta(\frac{n}{\log{n}})$, where $n$ is the order of a graph. The following theorem that partitions the edges of a graph into complete bipartite graphs is due to Erd\H{o}s and Pyber. We use it in the proof of Theorem \ref{lboxonn}
\begin{theorem}[Theorem 1,  Erd\H{o}s and Pyber \cite{erdHos1997covering}]
\label{erdospyber}
Let $G$ be a graph on $n$ vertices. The edge set $E(G)$ can be partitioned into complete bipartite graphs such that each vertex $v \in V(G)$ is contained in most $c\cdot \frac{n}{\log{n}}$ of the bipartite subgraphs.
\end{theorem}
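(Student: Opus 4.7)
The plan is to prove this theorem via an iterative greedy extraction of complete bipartite subgraphs, combined with a dependent-random-choice density lemma that guarantees each extracted biclique is large enough for a per-vertex charging argument to go through. The end result is a two-phase partition: a ``dense phase'' that extracts bicliques so long as the remaining graph is sufficiently dense, and a ``sparse phase'' in which every residual edge becomes its own $K_{1,1}$.

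First I would establish the following key lemma via dependent random choice: any graph $H$ on $n$ vertices with at least $\epsilon n^2$ edges contains a biclique $K_{s,t}$ with $s \cdot t \geq \Omega(n \log n)$, and in particular, when $\epsilon$ is bounded below, one may take $s \geq \Omega(\log n)$. The argument is standard: pick a random $s$-subset $S \subseteq V(H)$ and examine its common neighborhood $T = \{x \in V(H) : S \subseteq N_H(x)\}$; by Jensen's inequality $\mathbb{E}[|T|] \geq n(2\epsilon)^s$, and a suitable choice of $s$ (with $s \log(1/\epsilon) \lesssim \log n$) yields both sides of the required size.

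Next I would iterate. Let $G_0 = G$, and while $G_i$ has a vertex of degree larger than $n/\log n$ (or equivalently, an induced subgraph of density bounded below by a constant), apply the lemma to extract a biclique $B_i$, set $G_{i+1} = G_i \setminus E(B_i)$, and repeat. When the iteration terminates, $G_k$ has maximum degree at most $n/\log n$; declare each of its residual edges to be its own singleton biclique $K_{1,1}$ in the final partition. For the per-vertex count, each extracted biclique containing a vertex $v$ deletes at least $\log n$ of $v$'s incident edges (since both sides of each extracted $B_i$ have size $\Omega(\log n)$), so $v$ can participate in at most $(n-1)/\log n$ extracted bicliques, and in at most $n/\log n$ singleton bicliques from the sparse phase, for a total of $O(n/\log n)$.

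The main obstacle --- and the reason the proof is delicate rather than routine --- is that the straightforward dependent-random-choice calculation in a graph of density $1/\log n$ yields bicliques whose smaller side is only $\Theta(\log n / \log\log n)$, which would cost a $\log\log n$ factor in the final bound. Overcoming this requires either handling different density regimes in separate phases (so that whenever one extracts, the density is lower-bounded by a constant), or amortizing the charge across several successive extractions that cumulatively delete $\Omega(\log n)$ incident edges per vertex. The explicit constant $24$ then falls out of careful bookkeeping, in particular from fixing the density threshold that separates the dense and sparse phases.
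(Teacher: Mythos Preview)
The paper does not prove this theorem; it is quoted from Erd\H{o}s and Pyber \cite{ERDOS1997249} and invoked as a black box in the proof of Theorem~\ref{lboxonn}. There is therefore no in-paper argument to compare your proposal against.

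Evaluating the proposal on its own merits: the two-phase architecture (greedily extract large bicliques while the graph is dense, then declare each residual edge a $K_{1,1}$) is indeed the shape of the original Erd\H{o}s--Pyber argument. However, the proposal is incomplete exactly at the point you yourself flag. You correctly note that dependent random choice at density $\Theta(1/\log n)$ only guarantees a biclique whose smaller side has $\Theta(\log n/\log\log n)$ vertices, and with that extraction lemma your charging argument gives only $O(n\log\log n/\log n)$ bicliques per vertex, not $O(n/\log n)$. You then assert that this is repaired either by restricting extractions to a constant-density regime or by an amortized charge over successive extractions, but you carry out neither fix. The first suggestion, as stated, is not obviously sound: the graph can have maximum degree well above $n/\log n$ while its global density is $o(1)$, so ``only extract at constant density'' may simply never fire. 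The second suggestion is a label, not an argument. This step is the substance of the Erd\H{o}s--Pyber proof, not bookkeeping, and the explicit constant $24$ comes from executing it; as written, your sketch establishes only the weaker $O(n\log\log n/\log n)$ bound.
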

\begin{theorem}
\label{lboxonn}
Let $G$ be a graph on $n$ vertices. Then, $\lbox(G) \leq c\cdot \frac{n}{\log{n}}$.
\end{theorem}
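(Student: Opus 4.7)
The plan is to exploit the standard duality between local boxicity and covering the complement: writing $G = \bigcap_i I_i$ is equivalent to covering the non-edges of $G$ by the complements $I_i^c$, so I will work with $G^c$ and turn a bipartite partition into interval graphs.

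First, I would apply \emph{Theorem~\ref{erdospyber}} not to $G$ but to its complement $G^c$ (which still has $n$ vertices). This yields a partition of $E(G^c)$ into complete bipartite subgraphs $B_1, B_2, \ldots, B_k$, where each $B_t$ has parts $A_t, C_t \subseteq V(G)$ with $A_t \cap C_t = \emptyset$ and all $|A_t||C_t|$ edges between them are non-edges of $G$, and such that every vertex $v \in V(G)$ lies in $A_t \cup C_t$ for at most $24 \cdot n/\log n$ indices $t$.

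Next, for each $t$ I would construct an interval graph $I_t$ on the vertex set $V(G)$ as follows: every vertex in $V(G)\setminus(A_t \cup C_t)$ is declared universal, the vertices of $A_t$ form a clique, the vertices of $C_t$ form a clique, and there are no edges between $A_t$ and $C_t$. This is realized on the line by assigning very long intervals to the universal vertices, overlapping short intervals at a point $p_A$ to the vertices of $A_t$, and overlapping short intervals at a disjoint point $p_C$ to those of $C_t$; hence $I_t$ is indeed an interval graph. Observe that $v$ appears as a non-universal vertex of $I_t$ precisely when $v \in A_t \cup C_t$.

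I then need to verify that $G = \bigcap_{t=1}^{k} I_t$. For any non-edge $uv$ of $G$, the Erdős--Pyber partition places $uv$ in exactly one $B_t$, so $\{u,v\}$ straddles $A_t$ and $C_t$ in that $I_t$, making $uv \notin I_t$ and hence $uv \notin \bigcap_t I_t$. Conversely, if $uv \in E(G)$, then for every $t$ the pair $\{u,v\}$ is not split between $A_t$ and $C_t$ (otherwise $uv$ would be an edge of $G^c$ placed in $B_t$), so either at least one of $u,v$ is universal in $I_t$, or both lie in the same clique $A_t$ or $C_t$; either way $uv \in I_t$. Combined with the frequency bound from the previous paragraph, this gives $lbox(G) \leq 24 \cdot n/\log n$.

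The substantive step is the correct bookkeeping of which graph the Erdős--Pyber partition should be applied to, and the construction of $I_t$ so that universality corresponds exactly to \emph{not} being used by the bipartite piece $B_t$; once that is set up, the verification reduces to an edge-by-edge check and the frequency bound is inherited directly from \emph{Theorem~\ref{erdospyber}}. No heavy computation is needed, and there are no optimization choices left open.
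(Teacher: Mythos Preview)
Your proposal is correct and follows essentially the same approach as the paper: apply Erd\H{o}s--Pyber to the complement $\overline{G}$, then turn each complete bipartite piece with parts $A_t,C_t$ into an interval graph in which $A_t$ and $C_t$ are disjoint cliques and all remaining vertices are universal. The paper realises this with the concrete intervals $[1,2]$, $[3,4]$, $[1,4]$, but the verification and frequency count are identical to yours.
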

\begin{proof}
Let $\overline{G}$ be the complement of the graph $G$ i.e. the vertex set $V(\overline{G}) = V(G)$ and the edge set $E(\overline{G})=$ $\{uv:uv \notin E(G)\}$. Now, $E(\overline{G})$ is partitioned into $k$ complete bipartite graphs $G_1, G_2, \ldots ,G_k$ using Theorem~\ref{erdospyber}. From each complete bipartite graph $G_i$ we construct one interval graph $I_i$ whose interval representation is denoted by $f_i$. Let $A_i$ and $B_i$ be the two constituting parts of the complete bipartite graph $G_i$. All the vertices $v \in A_i$ are assigned the interval $[1,2]$ and all the vertices $v \in B_i$ are assigned the interval $[3,4]$ in $f_i$. Any vertex that is not present in $G_i$ is assigned the entire real line as its interval in $f_i$.

Note that each vertex of $G$ appears as a non-universal vertex in at most $c\cdot \frac{n}{\log{n}}$ number of interval graphs from the set $\{I_1,I_2, \ldots, I_k\}$. We now argue that $G = \cap_{i=1}^k I_i$. 
\begin{claim}
\label{claimnonadj}
If $uv \notin E(G)$ then there exists exactly one interval graph $I_i$ where $uv \notin E(I_i)$.
\end{claim}
As $u$ and  $v$ are not adjacent in $G$, they are adjacent in $\overline{G}$. Since we have partitioned $E(\overline{G})$ into $k$ complete bipartite graphs, there exists exactly one complete bipartite graph, say $G_i$, which has, without loss of generality, $u \in A_i,v \in B_i$. Then, $u$ receives the interval $[1, 2]$ and $v$ receives the interval $[3,4]$ in $f_i$. Thus, $uv \notin E(I_i)$. This proves the claim. 

\begin{claim}
\label{claimadj}
If $uv \in E(G)$ then in all the interval graphs $I_i$, where $1 \leq i \leq k$, $uv \in E(I_i)$.
\end{claim}
Since $uv \in E(G)$, they are not adjacent in $\overline{G}$. In every complete bipartite graph $G_i$ that $u$ or $v$ is absent, it ($u$ or $v$) acts as a universal vertex in the interval graph $I_i$ constructed. Further, in the bipartite graphs $G_i$ where both $u$ and $v$ are present, they appear on the same part ($A_i$ or $B_i$) thus getting the same interval ($[1,2]$ or $[3,4]$) in $f_i$. Hence, $uv \in E(I_i), \forall 1 \leq i \leq k$. This proves the claim and thereby the theorem.  
\end{proof}
Combining Lemma \ref{lemmaragheb}, Theorem \ref{theroem2} and Theorem \ref{lboxonn}, we get the following corollary. 
\begin{corollary}
\label{cor_lbox_vertices}
The maximum local boxicity of a graph on $n$ vertices is $\Theta(\frac{n}{\log n})$.  
\end{corollary}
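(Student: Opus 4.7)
The proof is a short combination of the three cited results, splitting into an upper bound and a matching lower bound.

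For the upper bound, I would apply \emph{Theorem~\ref{lboxonn}} directly: for every graph $G$ on $n$ vertices we have $lbox(G) \leq 24 \cdot n/\log n$, so the maximum local boxicity over all $n$-vertex graphs is $O(n/\log n)$.

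For the matching lower bound, my plan is to transfer the known $\Omega(n/\log n)$ bound on local dimension to local boxicity via the comparability graph. By \emph{Theorem~\ref{theroem2}}, there exists a poset $\mathcal{P}$ on $n$ points with $ldim(\mathcal{P}) \in \Omega(n/\log n)$. Let $G_{\mathcal{P}}$ be its comparability graph, which has exactly $n$ vertices. \emph{Lemma~\ref{lemma1}} supplies the inequality $ldim(\mathcal{P}) \leq 2\,lbox(G_{\mathcal{P}}) + 1$, which rearranges to
\[
lbox(G_{\mathcal{P}}) \;\geq\; \frac{ldim(\mathcal{P}) - 1}{2} \;\in\; \Omega\!\left(\frac{n}{\log n}\right).
\]
Combining the two bounds yields the desired $\Theta(n/\log n)$.

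The only small subtlety I would flag is the hypothesis $\chi(G_{\mathcal{P}}) \neq 1$ required by \emph{Lemma~\ref{lemma1}}. The only posets violating this are antichains, whose comparability graphs are edgeless and whose local dimension is bounded by a constant, so any poset witnessing the $\Omega(n/\log n)$ lower bound of \emph{Theorem~\ref{theroem2}} must have at least one comparable pair, and the lemma applies. Beyond this sanity check, there is no real obstacle: the work is entirely in the two cited ingredients, and this corollary just packages them together.
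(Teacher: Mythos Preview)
Your proof is correct and follows exactly the route the paper takes: the upper bound comes straight from Theorem~\ref{lboxonn}, and the lower bound is obtained by feeding the $\Omega(n/\log n)$ poset from Theorem~\ref{theroem2} through the inequality $ldim(\mathcal{P}) \leq 2\,lbox(G_{\mathcal{P}})+1$ of Lemma~\ref{lemma1}. Your remark on the $\chi(G_{\mathcal{P}})\neq 1$ hypothesis is a nice sanity check that the paper leaves implicit.
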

\section{Local boxicity and the product dimension of a graph}
The \emph{direct product}, denoted by $G \times H$, of graphs $G$ and $H$ has $V(G \times H) = V(G) \times V(H)$ and $E(G \times H) = \{(a,b)(c,d):a,c \in V(G),\ b,d \in V(H),\ ac \in E(G),$ and $bd \in E(H)\}$. 
The \emph{product dimension}, also known as \emph{Prague dimension}, of a graph $G$ is the minimum positive integer $k$ such that $G$ is an induced subgraph of the direct product of $k$ complete graphs. The parameter product dimension was introduced and studied by Ne{\v{s}}et{\v{r}}il 
and Pultr \cite{10.1007/3-540-08442-8_119}. More results in  \cite{Furedi2000,LOVASZ198047}. Researchers have tried to find relations between various dimensional parameters of a graph. Theorem~\ref{adigathm} relates the dimension of a poset and the boxicity of its comparability graph; F{\"u}redi \cite{Furedi2000} tries to explore a connection with the poset dimension to bound the product dimension of a Kneser graph; Chatterjee and Ghosh \cite{chatterjee2010ferrers} finds a relation between the \emph{Ferrers dimension} of a graph and its boxicity; Basavaraju et al. \cite{basavaraju2016separation} relates the \emph{separation dimension} of a graph with the boxicity of its line graph. Though a relation between the boxicity of a graph and its product dimension is not known to be explored yet, it may be noted that Chandran et al. \cite{CHANDRAN2015100} studied boxicity of the (Cartesian, strong, and direct) products of graphs. Theorem 14 in their paper gives a trivial upper bound of $nk$ for the boxicity of a graph with $n$ vertices having a product dimension of $k$. In this section, we show that the product dimension of a graph is an upper bound to its local boxicity. For this, we state below an alternate definition of product dimension by Lov{\'a}sz, Ne{\v{s}}et{\v{r}}il, and Pultr \cite{LOVASZ198047}.  
\label{sec:prod_dimension}
\begin{definition}[Lov{\'a}sz, Ne{\v{s}}et{\v{r}}il, and Pultr \cite{LOVASZ198047}]
\label{pdimdef}
The product dimension of a graph $G$, denoted by $\proddim(G)$, is the minimum positive integer $k$ for which there exists a function $f:V(G) \rightarrow \mathbb{N}^k$, such that $uv \in E(G)$, if and only if $f(u)$ and $f(v)$ differ in exactly $k$ coordinates. 
\end{definition}
\begin{theorem}
\label{lboxProdDim}
For any graph $G$, $\lbox(G) \leq \proddim(G)$.
\end{theorem}
\begin{proof}
Let $\proddim(G)=k$. Let $f:V(G) \rightarrow \mathbb{N}^k$ be a $k$-coordinate representation of $G$ (that satisfies the condition of Definition~\ref{pdimdef}) where $f_i(v)$ denotes the $i^{th}$ coordinate of $f(v)$. For each $i \in [k]$, let $S_i = \{f_i(v):v \in V(G)\}$. Let $V(G)=\{v_1,v_2, \ldots, v_n\}$. For each $i \in [k], j \in S_i$, we construct an interval graph $I_{i,j}$ in the following way. Let $g_{i,j}$ be an interval representation of $I_{i,j}$ ($g_{i,j}:V(I_{i,j}) \rightarrow X$, where $X$ is the set of all closed intervals on the real line). For a vertex $v_a \in \{v_1,v_2, \ldots, v_n\}$, if $f_i(v_a)=j$, then $g_{i,j}(v_a)=[a,a]$. Otherwise, $g_{i,j}(v_a) = [1,n]$. 

In order to show that $G = \bigcap_{i \in [k]}(\bigcap_{j \in S_i} I_{i,j})$, consider any $v_a, v_b \in V(G)$. If $v_av_b \in E(G)$, then $\forall i \in [k], f_i(v_a) \neq f_i(v_b)$ and therefore either $v_a$ or $v_b$ is a universal vertex in every interval graph we construct. Now, suppose $v_av_b \notin E(G)$. Then, for some $i \in [k]$, $f_i(v_a) = f_i(v_b)$ ($=j$, say) and therefore, from our construction, the intervals of $v_a$ and $v_b$ don't overlap in $g_{i,j}$.  Thus, $G = \bigcap_{i \in [k]}(\bigcap_{j \in S_i} I_{i,j})$. 

For any given $i \in [k]$, a vertex $v \in V(G)$ appears as a non-universal vertex in exactly one interval graph $I_{i,j}$ where $j=f_i(v)$. Thus, $v$ appears as a non-universal vertex in at most $k$ interval graphs in this collection. Hence, $\lbox(G) \leq k =  \proddim(G)$.
\end{proof}
\begin{corollary}
$\ldim(\mathcal{P}) \leq 2\ \proddim(G_{\mathcal{P}})+1$, where $\mathcal{P}=(X,\preceq)$ is a poset and $G_{\mathcal{P}}$ is its underlying comparability graph.
\end{corollary}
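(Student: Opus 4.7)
The plan is to derive this corollary by directly chaining the two inequalities already in hand: the preceding lemma, which tells us that $lbox(G) \leq prod\_dim(G)$ for any graph $G$, and Lemma \ref{lemma1}, which tells us that $ldim(\mathcal{P}) \leq 2\, lbox(G_{\mathcal{P}}) + 1$ whenever $\chi(G_{\mathcal{P}}) \neq 1$. So the core step is a one-line composition: apply the preceding lemma with $G = G_{\mathcal{P}}$ and substitute the resulting upper bound for $lbox(G_{\mathcal{P}})$ into the right-hand side of Lemma \ref{lemma1}, obtaining $ldim(\mathcal{P}) \leq 2\, prod\_dim(G_{\mathcal{P}}) + 1$.

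The only subtlety is the hypothesis $\chi(G_{\mathcal{P}}) \neq 1$ in Lemma \ref{lemma1}. I would handle the excluded case $\chi(G_{\mathcal{P}}) = 1$ as a short preliminary remark. If $\chi(G_{\mathcal{P}}) = 1$, then $G_{\mathcal{P}}$ has no edges, so no two elements of $\mathcal{P}$ are comparable; thus $\mathcal{P}$ is an antichain and $ldim(\mathcal{P}) = 1$. Since $prod\_dim(G_{\mathcal{P}}) \geq 0$, the inequality $ldim(\mathcal{P}) \leq 2\, prod\_dim(G_{\mathcal{P}}) + 1$ is then immediate. (If one takes $prod\_dim$ of an edgeless graph to be $1$ rather than $0$, the inequality is still trivially satisfied.)

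There is no serious obstacle here; the statement is essentially a transitivity-of-bounds corollary, and the whole argument is no more than a few lines once the edge case is dispatched. If the authors wish to make the presentation slightly more self-contained, I would briefly remind the reader of the statements being combined, rather than expanding them, since both are displayed immediately above.
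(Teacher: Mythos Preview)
Your proposal is correct and matches the paper's approach: the corollary is stated without proof in the paper, being an immediate consequence of the preceding lemma together with Lemma~\ref{lemma1}. Your handling of the degenerate case $\chi(G_{\mathcal{P}})=1$ is a reasonable bit of care that the paper itself omits.
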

The \emph{Kneser graph} $K(n,k)$ is the graph whose vertex set is the set of all $k$-sized subsets of $[n]$ and any two such vertices are adjacent to each other if and only if the corresponding $k$-sized subsets do not intersect with each other. The following result is due to Gargano, K{\"o}rner, and Vaccaro \cite{gargano1994capacities} and Poljak, Pultr, and R{\"o}dl \cite{poljak1978dimension}. It is stated in Section VIII of Korner and Orlitsky \cite{korner1998zero}. 
\begin{theorem}[Gargano, K{\"o}rner, and Vaccaro \cite{gargano1994capacities}, Poljak, Pultr, and R{\"o}dl \cite{poljak1978dimension}]
\label{kneser}
$$\log{\log{n}} \leq \proddim(K(n,k)) \leq \frac{k}{2}\log{\log{n}}.$$
\end{theorem}
Corollary \ref{lboxProdDimKneser} follows directly from Theorem~\ref{lboxProdDim} and Theorem~\ref{kneser}.
\begin{corollary}
\label{lboxProdDimKneser}
$$\lbox(K(n,k)) \leq \frac{k}{2}\log{\log{n}}.$$
\end{corollary}
It would be interesting to explore how tight this upper bound for the local boxicity of a Kneser graph is. It is known from \cite{LOVASZ198047} that the graph $nK_2$, that is the graph of $2n$ vertices formed by taking $n$ copies of an edge, has a product dimension of $\Omega(\log{n})$. However, the local boxicity of this graph is one. That means that one can not give a non-trivial lower bound for the local boxicity of a graph solely in terms of its product dimension. It is worth exploring whether one can show such a lower bound in terms of the product dimension and a third parameter of the graph under consideration. 
\section{Cube representation of graphs of high girth}
\label{Sec:girth}
The \emph{girth} of a graph is the length of a smallest cycle in it. Girth of an acyclic graph is assumed to be $\infty$.
A $k$-dimensional cube or a $k$-cube is defined as the Cartesian product of unit length closed intervals $[a_1,a_1+1] \times [a_2,a_2+1] \times \cdots \times [a_k,a_k+1]$. Therefore, $k$-cubes are $k$ dimensional axis-parallel cubes. A $k$-cube representation of a graph $G$ is a mapping of the vertices of $G$ to $k$-cubes in the $k$-dimensional Euclidean space such that two vertices in $G$ are adjacent if and only if their corresponding $k$-cubes have a non-empty intersection.
\begin{definition}[Cubicity of a graph]
The cubicity of a graph $G$, denoted by $\cubicity(G)$, is defined as the minimum positive integer $k$ such that $G$ has a $k$-cube representation.
\end{definition}  
A graph is a \emph{unit interval graph} if it is an interval graph and it has an interval representation where every interval is of unit length. Below we state an alternate definition of cubicity in terms of unit interval graphs. 
\begin{definition}[Alternate definition of cubicity]
The cubicity of a graph $G$, denoted by $\cubicity(G)$, is the minimum positive integer $k$ such that there exist $k$ unit interval graphs $I_1, I_2, \ldots, I_k$ with $G = \underset{i=1}{\overset{k}{\bigcap}}I_i$.
\end{definition}
It is known and follows from their definitions that, for a graph $G$, $\lbox(G) \leq \boxicity(G) \leq \cubicity(G)$. Most graphs of high boxicity (and, thereby high cubicity) we know are graphs of low girth, whether it be the Roberts' graph $R_n$ defined in Section \ref{subsec:local_box} (a complete graph on $2n$ vertices minus a perfect matching) or the random graph studied by Erd{ő}s, Kierstead, and Trotter  \cite{erdos1991dimension}. Therefore, it is a natural question to ask whether the boxicity of a graph decreases as its girth increases. It was shown by Bhowmick and Chandran \cite{bhowmick2010boxicity} that if $G$ is an \emph{asteroidal triple free} graph having girth at least $5$, then the $\boxicity(G) \leq 2$ and $\cubicity(G) \leq 2 \lceil \log_2{\psi(G)} \rceil + 4$, where $\psi(G)$ denotes the claw number of $G$. The \emph{claw number} of a graph $G$ is the number of edges in the largest star that is an induced subgraph of $G$. Esperet and Joret \cite{esperet2013boxicity} showed that for a fixed surface $\Sigma$ there exists an integer $g_{_\Sigma}$ such that every graph with girth at least $g_{_\Sigma}$ embeddable in $\Sigma$ has boxicity at most $4$. Here we give a general upper bound for the cubicity of a graph in terms of its girth and order. We show that, for a graph $G$ on $n$ vertices with girth greater than $g+1$, $\cubicity(G) \in O(n^\frac{1}{\lfloor \frac{g}{2} \rfloor }\log{n})$. We first show in Lemma~\ref{DegenGirth} that such a graph $G$ is $\Big \lceil n^\frac{1}{\lfloor \frac{g}{2} \rfloor } \Big \rceil$-degenerate and then use Theorem~\ref{CubDgn} (due to Adiga, Chandran, and Mathew \cite{adiga2014cubicity}) stated below, to obtain the result. 
\begin{definition}
A graph is $k$-degenerate if the vertices of the graph can be enumerated in such a way that every vertex is followed by at most $k$ of its neighbors. The least number $k$ such that the graph is $k$-degenerate is called the degeneracy of the graph.
\end{definition}
\begin{theorem}[Theorem 1, Adiga, Chandran, and Mathew \cite{adiga2014cubicity}]
\label{CubDgn}
For every $k$-degenerate graph $G$, $\cubicity(G) \leq (k+2)\lceil2e \log{n} \rceil$. 
\end{theorem}
\begin{lemma}
\label{DegenGirth}
Let $G$ be a graph on $n$ vertices having girth greater than $g+1$. Then $G$ is $k$-degenerate, where $k = \Big \lceil n^\frac{1}{\lfloor \frac{g}{2} \rfloor } \Big \rceil$.
\end{lemma}
\begin{proof}
We will prove this lemma by contradiction. Suppose the lemma is not true. Then, there exists a $V_1 \subseteq V(G)$ such that $G[V_1]$ is connected  and every vertex in $G[V_1]$ has degree greater than $k$. Consider a vertex $v$ in $G[V_1]$. Let $S$ be the set of vertices that are at a distance of at most $\lfloor \frac{g}{2} \rfloor$ from $v$. Since girth of $G[V_1]$ is greater than $g+1$, $G[S]$ is a tree. As the minimum degree of a vertex in $G[V_1]$ is greater than $k$, the leaf vertices of $G[S]$ have to be adjacent to some vertices in $V_1 \setminus S$. Thus, the set $V_1 \setminus S$ is non-empty. But then we have $|S| > k^{\lfloor \frac{g}{2} \rfloor} = n$, contradicting the fact that $V_1 \setminus S$ is non-empty. Hence our assumption that $G$ is not $k$-degenerate is false. 
\end{proof}
From Lemma~\ref{DegenGirth} and Theorem~\ref{CubDgn}, we have the following theorem.
\begin{theorem}
\label{CubGirth}
Let $G$ be a graph on $n$ vertices with girth greater than $g+1$. Then, $\cubicity(G)$ is in $O(n^\frac{1}{\lfloor \frac{g}{2} \rfloor }\log{n})$. More precisely, $\cubicity(G) \leq (n^\frac{1}{\lfloor \frac{g}{2} \rfloor}+2)\lceil2e \log{n} \rceil$.
\end{theorem}
\begin{example}
\label{CubStar}
It was shown by Adiga and Chandran \cite{adiga2010cubicity} that the cubicity of a graph $G$ is at least $\lceil \log_2{\psi(G)} \rceil$, where $\psi(G)$ is the claw number of $G$. Consider the star graph $S_{1,n-1}$ on $n$ vertices having a claw number of $n-1$. Since the girth of a tree is assumed to be $\infty$, Theorem~\ref{CubGirth} gives an asymptotically tight upper bound for the cubicity of $S_{1,n-1}$. 
\end{example}
\begin{example}
\label{CubPend}
Alon, Ganguly, and Srivastava  \cite{alon2019high} showed that there exists a graph $G$ on $n$ vertices having girth at least $\frac{log_5{n}}{4}$. Consider such a graph $G$. Take a vertex $v$ in $G$. We add $n$ pendent vertices to $v$ to construct a graph $G^{'}$. Thus, the claw number of $G^{'}$, $\psi(G^{'}) \geq n$. Hence, $\cubicity(G^{'}) \in \Omega(\log{n})$. Theorem~\ref{CubGirth} gives an upper bound for the cubicity of $G^{'}$. 
\end{example}
\begin{example}
\label{CubBip}
Consider the bipartite graph $G$ obtained by removing a perfect matching from a complete bipartite graph, $K_{n,n}$ on $2n$ vertices. It is known that the boxicity (and thereby cubicity) of $G$ is at least $\frac{n}{2}$. Applying Theorem~\ref{CubGirth} with $g=2$, we get $\cubicity(G) \in O(n \log{n})$.
\end{example}
From Example~\ref{CubStar} and Example~\ref{CubPend}, we observe that there are graphs of high girth on $n$ vertices whose cubicity is in $\Omega(\log{n})$. From Example~\ref{CubBip}, we observe that for a graph on $n$ vertices with girth greater than $g+1$, we cannot get a bound of $O(n^{\alpha g})$ for its cubicity, where $\alpha$ is a constant less than $\frac{1}{2}$. From these two observations, we believe it would be worthwhile to try improving the bound in Theorem~\ref{CubGirth} to $(c_1 n^{\frac{1}{\lfloor \frac{g}{2} \rfloor}} + c_2 \log{n})$, where $c_1$ and $c_2$ are constants.
\section*{Acknowledgement} The authors would like to thank the anonymous reviewers for their valuable suggestions. 
\section{References}
\bibliographystyle{plain}

\end{document}